\title{Reciprocity law of finite Galois extension fields using Jacobian Varitey}
\author{Shinji Ishida\thanks{email address: nanacanji@gmail.com}}
\providecommand{\keywords}[1]{{\textit{Keywords:}} #1}
\newtheorem{thm}{Theorem}[section]
\newtheorem{coro}{Corollary}[section]
\begin{document}
\maketitle
%%%%%%%%%%%%%%%%%%%%%%%%%%%%%%%%%%%%%%%%%%%%%%%%%%%%%%%%%%%%%%%%%%%%%%%%%%%%%%%%Abstract starts
%%%%%%%%%%%%%%%%%%%%%%%%%%%%%%%%%%%%%%%%%%%%%%%%%%%%%%%%%%%%%%%%%%%%%%%%%%%%%%%

\begin{abstract}
We discuss a reciprocity law of finite Galois extension fields defied by odd degree irreducible polynomials $f(x) = x^{2g+1}+a_{1}x^{2g}+a_{2}x^{2g-1}+\cdots+a_{2g}x+a_{2g+1}$ over algebraic integer ring $\frak{O}_{K}$ of an algebraic field $K$ $([K:\mathbb{Q}]<\infty$). Let $\frak{p}$ be a prime ideal of $\frak{O}_{K}$ which is relatively prime to 2 and conductor $\mathfrak{F} =\{\alpha\in \mathfrak{O}_{K(\theta)}|$ $(\alpha)\mathfrak{O}_{K(\theta)}\subset \mathfrak{O}_{K}[\theta]$, where $f(\theta)=0$ and $\theta\in K_{f}\}$. Then our reciprocity law says that $\frak{p}$ splits completely over the minimal splitting field $K_{f}$ of $f(x)$ over $K$ if and only if $J_{f, p}(2)$ $\cong$ $\mathbb{Z}/2\mathbb{Z}^{\oplus 2g}$, where $J_{f,\frak{p}}$ is a Jacobian Variety of a projective non-singular irreducible model of $y^2=f(x)$ of genus $g$ over $\frak{O}_{K}/\frak{p}$ and $J_{f, \frak{p}}(2)$ is 2-torsion rational points of $J_{f, \frak{p}}$ over $\frak{O}_{K}/\frak{p}$. As an interesting corollary, Galois group $Gal(K_{f}/K)$ is a subgroup of general linear group $GL_{2g}(\mathbb{F}_{2})$.
\end{abstract}
\keywords\footnote[1]{The subject classification codes by 2020 Mathematics Subject Classification is primary-11R37.}{Algebraic number theory: global fields, nonabelian class field theory}

%%%%%%%%%%%%%%%%%%%%%%%%%%%%%%%%%%%%%%%%%%%%%%%%%%%%%%%%%%%%%%%%%%%%%%%%%%%%%%%%Introduction starts
%%%%%%%%%%%%%%%%%%%%%%%%%%%%%%%%%%%%%%%%%%%%%%%%%%%%%%%%%%%%%%%%%%%%%%%%%%%%%%%

\section{Introduction}
Let $f(x) = x^{2g+1}+a_{1}x^{2g}+a_{2}x^{2g-1}+\cdots+a_{2g}x+a_{2g+1}$ be an odd degree irreducible polynomial over algebraic integer ring $\frak{O}_{K}$ of an algebraic field $K$ $([K:\mathbb{Q}]<\infty$), and $K_{f}$ be the minimal splitting field of $f(x)$ over $K$. The purpose of this paper is to observe what kind of prime ideals of $\mathfrak{O}_{K}$ completely split over $K_{f}$ through Jacobian variety $J_{f}$ of a projective non-singular model of a hyper elliptic curve $y^2=f(x)$. We embed all roots of $f(x)=0$ into the 2-torsion points of Jacobian Variety and observe a prime ideal $\frak{p}$ of $K$ which is relatively prime to 2 and conductor $\mathfrak{F}$ splits completely over $K_{f}$ if and only if  $J_{f, p}(2)$ $\cong$ $\mathbb{Z}/2\mathbb{Z}^{\oplus 2g}$, where $J_{f, p}$ is a reduction of $J_{f}$ over $\frak{O}_{K}/\frak{p}$ and $g$ is genus of the projective non-singular model of hyper elliptic curve $y^2=f(x)$.\\

Remember usual cyclotomic field theory. According to usual cyclotomic field theory, for $n>2$, a prime number $p$ splits completely over $\mathbb{Q}(\zeta_{n})$ if and only if $p\equiv 1$ (mod $n$). This "$p\equiv 1$ (mod $n$)" means "all roots of $x^n=1$ over $\overline{\mathbb{F}}_{p}$ are included in rational points of commutative group variety $\mathbb{F}_{p}^{\ast}$". In other words, all roots of $x^n=1$ embedded into torsion subgroup of  a commutative group variety $\mathbb{Q}^{\ast}$ are included in rational points of commutative group variety $\mathbb{F}_{p}^{\ast}$ through modulo $p$. Note that cyclotomic polynomials over $\mathbb{Q}$ have even degree, they are out of scope of our target.\\

The main Theorem is as follow:
\begin{thm}[\bf{Reciprocity law of finite Galois extension field}]
Let $K$ be a finite algebraic field $([K:\mathbb{Q}]<\infty)$ and $\mathfrak{O}_{K}$ be its integer ring, $f(x) = x^{2g+1}+a_{1}x^{2g}+a_{2}x^{2g-1}+\cdots+a_{2g}x+a_{2g+1}$ be an odd degree irreducible polynomial over $\mathfrak{O}_{K}$ $(g>0)$, and $K_{f}$ be its minimal splitting field over $K$. For a prime ideal $\mathfrak{p}$ of $\mathfrak{O}_{K}$ which is relatively prime to 2 and conductor $\mathfrak{F} =\{\alpha\in \mathfrak{O}_{K(\theta)}|$ a principle ideal $(\alpha)$ of $\mathfrak{O}_{K(\theta)}\subset \mathfrak{O}_{K}[\theta]$, where $f(\theta)=0$ and $\theta\in K_{f}\}$, let $J_{f,\mathfrak{p}}$ be Jacobian Variety of a projective non-singular irreducible model $C_{f,\mathfrak{p}}$ of $y^2=f(x)$ of genus $g$ over $\mathfrak{O}_{K}/\mathfrak{p}$, where we assume that $f(x)=0$ has no multiple root over $\mathfrak{O}_{K}/\mathfrak{p}$. Then, the prime ideal $\mathfrak{p}$ of $\mathfrak{O}_{K}$ which is relatively prime to conductor $\mathfrak{F}$ splits completely over $K_{f}$ if and only if $J_{f, \mathfrak{p}}(2)$ $\cong$ $\mathbb{Z}/2\mathbb{Z}^{\oplus 2g}$.
\end{thm}

We show this Theorem in section 3 later. In section 2, we embed all roots of $f(x)=0$ into $2-$torsion subgroup $J_{f}(2)$ of Jacobian Variety of an irreducible non-singular projective curve model of $y^2=f(x)$. As a corollary, we have the following interesting result:

\begin{coro}
Let $f(x) = x^{2g+1}+a_{1}x^{2g}+a_{2}x^{2g-1}+\cdots+a_{2g}x+a_{2g+1}$ be an irreducible polynomial over a perfect field $F$ $(char(F)\neq 2)$, and $Gal(F_{f}/F)$ be its Galois group of minimal splitting field $F_{f}/F$ of $f(x)$. Then $Gal(F_{f}/F)$ is a subgroup of $GL_{2g}(\mathbb{F}_2)$.
\end{coro}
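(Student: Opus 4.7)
The plan is to construct a faithful $\mathbb{F}_2$-linear representation of $\operatorname{Gal}(F_f/F)$ on $J_f(2)$ using the embedding of the roots of $f$ into the $2$-torsion established in Section~2. Since $\operatorname{char}(F)\neq 2$ and the Jacobian $J_f$ of the smooth projective model of $y^2=f(x)$ has dimension $g$, one has $J_f(2)(\bar F)\cong(\mathbb{Z}/2\mathbb{Z})^{2g}$ as an abelian group, and its group of abelian-group automorphisms is precisely $GL_{2g}(\mathbb{F}_2)$.

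First I would observe that because $f(x)$, the curve $y^2=f(x)$, and its Jacobian $J_f$ are all defined over $F$, the group $G:=\operatorname{Gal}(F_f/F)$ acts on $J_f(\bar F)$ by group automorphisms; this action preserves the $2$-torsion subgroup and is therefore $\mathbb{F}_2$-linear, giving a homomorphism
\[
\rho : G \longrightarrow \operatorname{Aut}_{\mathbb{F}_2}\bigl(J_f(2)\bigr)\cong GL_{2g}(\mathbb{F}_2).
\]
The key point is that by the construction of Section~2, a root $\alpha_i$ of $f(x)$ is sent to the $2$-torsion class $e_i=[(\alpha_i,0)-P_\infty]$, and this assignment is Galois-equivariant since $\sigma\in G$ acts on divisor classes through its action on the underlying points: $\sigma(e_i)=[(\sigma(\alpha_i),0)-P_\infty]=e_{\sigma(i)}$.

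To show that $\rho$ is injective, I would suppose $\sigma\in\ker\rho$. Then $\sigma$ fixes every $e_i$, and the equivariance together with the injectivity of the assignment $\alpha_i\mapsto e_i$ (which in turn follows from the single explicit linear relation $e_1+\cdots+e_{2g+1}=0$ coming from the divisor of $y$) forces $\sigma(\alpha_i)=\alpha_i$ for all $i$; since $F_f=F(\alpha_1,\dots,\alpha_{2g+1})$, this yields $\sigma=1$. Hence $\rho$ realizes $G$ as a subgroup of $GL_{2g}(\mathbb{F}_2)$.

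The main, and essentially only, obstacle is verifying the Galois-equivariance of the root embedding and the precise identification of $J_f(2)$ with $\mathbb{F}_2^{2g}$ including the linear relation among the $e_i$; once these are in hand from Section~2, the remainder is the standard mechanism of the Galois representation on torsion of an abelian variety. A minor point to check is that perfectness of $F$ (together with $\operatorname{char}(F)\neq 2$) is what guarantees that $J_f(2)$ achieves its full rank $2g$ over $\bar F$, so that the codomain of $\rho$ is genuinely $GL_{2g}(\mathbb{F}_2)$ rather than a smaller linear group.
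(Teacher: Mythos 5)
Your proposal is correct and takes essentially the same route as the paper's (much terser) proof: the Galois action on $J_{f}(2)\cong(\mathbb{Z}/2\mathbb{Z})^{\oplus 2g}$ gives a homomorphism to $GL_{2g}(\mathbb{F}_{2})$, and injectivity holds because the roots embed injectively and Galois-equivariantly into the $2$-torsion, so an element acting trivially must fix every root and hence all of $F_{f}$. One minor correction: the injectivity of $\alpha_{i}\mapsto e_{i}$ rests on the argument in Theorem~2.1 that no function on $\overline{C_{f}}$ has divisor $P_{i}-P_{j}$ (equivalently, on the linear independence of $e_{1},\dots,e_{2g}$), not on the relation $e_{1}+\cdots+e_{2g+1}=0$ alone, which by itself would not rule out coincidences among the $e_{i}$.
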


In section 3, by remembering Dedekind-Kummer's Theorem and "Inclusion Theorem" for algebraic number fields, we obtain our main Theorem 3.3 as reciprocity law of finite Galois extension fields defined by odd degree irreducible polynomials over algebraic integer ring.\\

\section{Embed all roots of $f(x)=0$ into a Jacobian Variety}
In this section, we consider a perfect field $F$ whose characteristic is not 2 because the following embedding theorem states over such fields.

\begin{thm}
For an irreducible polynomial $f(x) = x^{2g+1}+a_{1}x^{2g}+a_{2}x^{2g-1}+\cdots+a_{2g}x+a_{2g+1}$ over $F$, by considering a projective plane curve $y^2=f(x)$ and its non-singular projective model $C_{f}$ over $F$, we can embed all roots $e_{1}, e_{2}, \cdots, e_{2g+1}$ of $f(x)=0$ into $\overline{J}_{f}=J_{f}\otimes_{F}\overline{F}$ $(J_{f}$ is Jacobian Variety of $C_{f}$ and $\overline{F}$ is an algebraic closure of $F)$ as $P_{1}$ $-$ $P_{\infty}$, $P_{2}$ $-$ $P_{\infty}$, $\cdots$, $P_{2g+1}$ $-$ $P_{\infty}$, where  $P_{i}$ are non-singular points of $C_{f}$ corresponding to $[e_{i}:0:1]$ of $y^2=f(x)$ and $P_{\infty}$ is a rational point of $C_{f}$ over $F$ corresponding to $[0:1:0]$ of $y^2=f(x)$. Furthermore, the subgroup of $\overline{J}_{f}$ generated by the image of $P_{1}$ $-$ $P_{\infty}$, $P_{2}$ $-$ $P_{\infty}$, $\cdots$, $P_{2g+1}$ $-$ $P_{\infty}$ is $\overline{J}_{f}(2)$ $\cong$ $\mathbb{Z}/2\mathbb{Z}^{\oplus 2g}$.
\end{thm}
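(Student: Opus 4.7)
The plan is to prove the statement in three steps: (i) verify each class $[P_i - P_\infty]$ lies in $\overline{J}_f(2)$; (ii) exhibit the relation among the $[P_i - P_\infty]$ forced by the function $y$; and (iii) show the generated subgroup exhausts $\overline{J}_f(2)$, which has order $2^{2g}$.

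For step (i), I compute the divisor of the rational function $x - e_i$ on $C_f$. Since $P_i = (e_i, 0)$ is a Weierstrass point and $\deg f = 2g+1$ is odd (so $P_\infty$ is the unique point at infinity on the normalization), one has $\mathrm{div}(x - e_i) = 2P_i - 2P_\infty$, giving $2[P_i - P_\infty] = 0$. For step (ii), from $y^2 = \prod_{i=1}^{2g+1}(x - e_i)$ together with the fact that $y$ has a pole of order $2g+1$ at $P_\infty$, I obtain $\mathrm{div}(y) = \sum_{i=1}^{2g+1} P_i - (2g+1) P_\infty$, and hence $\sum_{i=1}^{2g+1}[P_i - P_\infty] = 0$.

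For step (iii), let $H \subseteq \overline{J}_f(2)$ denote the subgroup generated by the classes $[P_i - P_\infty]$. The relation from (ii) makes $H$ a quotient of $\mathbb{F}_2^{2g+1}/\langle(1,\ldots,1)\rangle \cong \mathbb{F}_2^{2g}$, so $|H| \leq 2^{2g}$. Since $|\overline{J}_f(2)| = 2^{2g}$ by the structure theory of the $2$-torsion on an abelian variety of dimension $g$ in characteristic $\neq 2$, it remains to prove that for every $\emptyset \neq S \subsetneq \{1,\ldots,2g+1\}$ the divisor $\sum_{i \in S}(P_i - P_\infty)$ is not principal.

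This independence claim is the main obstacle. I would handle it using the explicit description of $L(nP_\infty)$ on a hyperelliptic curve with Weierstrass point at infinity: it has basis $\{x^i y^j : j \in \{0,1\},\ 2i + (2g+1)j \leq n\}$. Any function $\phi$ realizing $\mathrm{div}(\phi) = \sum_{i \in S} P_i - |S| P_\infty$ therefore has the form $\phi = a(x) + y\,b(x)$. If $|S| \leq 2g$, the degree bounds force $b = 0$, and any polynomial in $x$ vanishes to even order at every Weierstrass point, contradicting the required multiplicity $1$ at each $P_i$ unless $S = \emptyset$. If $|S| = 2g+1$, then $b$ must be a scalar; applying the hyperelliptic involution $\iota:(x,y)\mapsto(x,-y)$ and computing $\phi\,\phi^{\iota} = a(x)^2 - b^2 f(x)$, then comparing $\mathrm{div}(\phi\,\phi^{\iota}) = 2\,\mathrm{div}(y)$ with $\mathrm{div}(f(x)) = 2\,\mathrm{div}(y)$ and invoking the squarefreeness of $f$, one forces $a = 0$ and $\phi$ proportional to $y$, which is precisely the already-known relation with $S = \{1,\ldots,2g+1\}$. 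This exhausts the cases, so $|H| = 2^{2g} = |\overline{J}_f(2)|$, and the theorem follows.
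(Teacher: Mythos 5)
Your proposal is correct, and its skeleton coincides with the paper's: both arguments rest on $\mathrm{div}(x-e_i)=2P_i-2P_\infty$ for $2$-torsion, $\mathrm{div}(y)=\sum_i P_i-(2g+1)P_\infty$ for the single relation, and a function-field argument to rule out further relations, concluding by comparison with $|\overline{J}_f(2)|=2^{2g}$. The genuine difference is in how the two key technical points are handled. First, the paper devotes roughly half its proof to an explicit chain of blow-ups of the singular point $[0:1:0]$ to establish that the normalization has a \emph{single} point $P_\infty$ over infinity; you simply invoke this as the standard fact for odd-degree models, which is legitimate but worth a sentence of justification since the whole construction collapses if there were two points at infinity. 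Second, and more substantively, your independence step is carried out in full: you identify $L(nP_\infty)$ via its monomial basis $\{x^iy^j\}$, use the degree bound $2\deg b+(2g+1)\le |S|$ to kill the $y$-component when $|S|\le 2g$, observe that polynomials in $x$ vanish to even order at Weierstrass points, and dispose of $|S|=2g+1$ by the norm computation $\phi\phi^\iota=a^2-b^2f$ together with squarefreeness of $f$. The paper's corresponding passage is a one-line appeal to ``$\sqrt{x-e_i}\notin K(C_f)$'' whose logic is hard to reconstruct as written; your version supplies the missing argument and is the standard rigorous treatment (it also yields injectivity of $e_i\mapsto[P_i-P_\infty]$ as the case $|S|=2$, which the paper proves separately by a genus argument). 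In short: same route, but your rendering of the independence step is a genuine improvement in rigor over the source.
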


\begin{proof}
Let $[X:Y:Z]$ be the homogenous coordinate of $\mathbb{P}^{2}_{F}$ and consider the following homogenous equation of the target projective plane curve:\\
\begin{equation}
Z^{2g-1}Y^{2}=X^{2g+1}+a_{1}X^{2g}Z+a_{2}X^{2g-1}Z^2+\cdots+a_{2g}XZ^{2g}+a_{2g+1}Z^{2g+1}.
\end{equation}
Although infinity point $[0:1:0]$ is a singular point of $y^2=f(x)$ in projective plane $\mathbb{P}^{2}_{F}$ ($x=X/Z$ and $y=Y/Z$ in (2.1) above), we can resolve this singular point by repeating blow-up at $[0:1:0]$. Then please note that the inverse image of $[0:1:0]$ from projective singular curve $y^2=f(x)$ to $C_{f}$ consists of only one point because the degree of $f(x)$ is odd. Actually, the projective plane curve $y^2=f(x)$ is represented by the following equation near $[0:1:0]$ through $xz$ ($x=X/Y$ and $z=Z/Y$ in (2.1) above) plane by considering homogenous coordinate: \\
\begin{equation}
z^{2g-1}=x^{2g+1}+a_{1}x^{2g}z+a_{2}x^{2g-1}z^2+\cdots+a_{2g}xz^{2g}+a_{2g+1}z^{2g+1}.
\end{equation}
By putting $z=ux$, we can blow up at $(0, 0)$:\\
\begin{equation}
x^{2g-2}(x^2(1+a_{1}u+a_{2}u^2+\cdots +a_{2g}u^{2n}+a_{2g+1}u^{2g+1})-u^{2g-1})=0.
\end{equation}
Next, we focus on $x^2(1+a_{1}u+a_{2}u^2+\cdots +a_{2g}u^{2g}+a_{2g+1}u^{2g+1})-u^{2g-1}=0$. By putting $x=mu$, we obtain\\
\begin{equation}
u^2\{m^2(1+a_{1}u+a_{2}u^2+\cdots +a_{2g}u^{2g}+a_{2g+1}u^{2g+1})-u^{2g-3})=0.
\end{equation}
Therefore, by repeating blow-up at $(0, 0)$, we obtain the following curve near $(0, 0)$:
\begin{equation}
t^2(1+a_{1}u+a_{2}u^2+\cdots +a_{2g}u^{2g}+a_{2g+1}u^{2g+1})-u^{3}=0.
\end{equation}
According to $\cite{Fulton}$ Chapter 3 Problem 3.22, $(0,0)$ is a cusp and we can resolve this cusp by blowing up one more time, and the inverse image of $(0,0)$ consists of only one point. \\

If the degree of $f(x)$ is even and larger than 3, $[0:1:0]$ should be a crunode of $y^2=f(x)$, it can be approximated by a curve $xz=0$ near $[0:1:0]$. Therefore, the inverse image of $[0:1:0]$ from $y^2=f(x)$ to $C_{f}$ has at least 2 points, we cannot embed all roots of $f(x)=0$ into Jacobian Variety via the method above.\\

Next, we observe properties of $P{_i}-P_{\infty}$ on $\overline{J}_{f} = J_{f}\otimes_{F}\overline{F}$. Since div($x-e_{i}) = 2(P{_i}-P_{\infty})$ as a divisor of $\overline{C_{f}}=C_{f}\otimes_{F}\overline{F}$, $P{_i}-P_{\infty}$ is a $2$-torsion point on $J_{f}\otimes_{F}\overline{F}$. Furthermore, if $P_{i} - P_{\infty}$ $\sim$ $P_{j} - P_{\infty}$, $P_{i} $ $-$ $P_{j}$ $=$ div($h$) for some $h\in K(\overline{C_{f}})$ of function filed of $\overline{C_{f}}$. However this is impossible on $\overline{C_{f}}$ if $i\neq j$ because there is no element of $\overline{C_{f}}$'s function field such that order 1 at $P_{i}$, order -1 at ${P_j}$ and 0 at all other points. So we can embed all roots of $f(x)=0$ into $\overline{J}_{f}$ injectively. Since $div(y)=P_{1}+P_{2}+\cdots +P_{2g+1}-(2g+1)P_{\infty}=(P_{1}-P_{\infty}) + (P_{2}-P_{\infty}) + \cdots + (P_{2g+1}-P_{\infty})$ and the images of $P_{1}-P_{\infty}, P_{2}-P_{\infty}+\cdots, P_{2g-1}-P_{\infty}, P_{2g}-P_{\infty}$ on $\overline{J}_{f}(2)$ generate the subgroup $(\mathbb{Z}/2\mathbb{Z})^{\oplus 2g}$ of $\overline{J}_{f}(2)$. \\

Actually, assume that $\bar{a}_{1}(P_{1}-P_{\infty}) + \bar{a}_{2}(P_{2}-P_{\infty}) + \cdots + \bar{a}_{m}(P_{m}-P_{\infty}) = 0$ on $\overline{J}_{f}$ as a vector space over $\mathbb{F}_{2}$ for some $m<2g+1$, and $\bar{a}_{1}$, $\bar{a}_{2}$, $\cdots$ $\bar{a}_{m}\in \mathbb{F}_{2}$. Then, let $a_{1}P_{1} + a_{2}P_{2} + \cdots + a_{m}P_{m}$ be its inverse image in divisor group of $C$. On the other hand,  since $\sqrt{(x-e_{i})}$ is not included in $K(C_{f})$ and $m<2g+1$, $a_{i}$ is even because $div(x-e_{i}) = 2(P_{i}-P_{\infty}$). This means $\bar{a}_{i}=0$ for all $i$. Hence $P_{1}-P_{\infty}$, $P_{2}-P_{\infty}$, $\cdots$, and $P_{2g}-P_{\infty}$ on $\overline{J}_{f}$ are generators of $(\mathbb{Z}/2\mathbb{Z})^{\oplus 2g}$.

\end{proof}

\begin{coro}
Let $f(x) = x^{2g+1}+a_{1}x^{2g}+a_{2}x^{2g-1}+\cdots+a_{2g}x+a_{2g+1}$ be an irreducible polynomial over a perfect field $F$ $(char(F)\neq 2)$, and $Gal(F_{f}/F)$ be Galois group of minimal splitting field $F_{f}/F$ of $f(x)$. Then $Gal(F_{f}/F)$ is a subgroup of $GL_{2g}(\mathbb{F}_2)$.
\end{coro}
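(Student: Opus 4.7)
The plan is to lift the natural permutation action of $\text{Gal}(F_f/F)$ on the $2g+1$ roots of $f(x)$ to an $\mathbb{F}_2$-linear action on $\overline{J}_f(2)$ by means of Theorem 2.1, and then verify that the resulting linear representation is faithful.

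First I would observe that the point $P_\infty = [0:1:0]$ is $F$-rational (as already noted in Theorem 2.1), so every $\sigma \in \text{Gal}(F_f/F)$ fixes $P_\infty$, while $\sigma$ permutes the roots $e_1,\ldots,e_{2g+1}$ of $f(x)$ and correspondingly permutes the affine Weierstrass points $P_1,\ldots,P_{2g+1}$ of $\overline{C}_f$. It follows that $\sigma$ sends the divisor class $[P_i - P_\infty]$ to $[P_{\sigma(i)} - P_\infty]$, so the finite subset $\{[P_i - P_\infty]\}_{i=1}^{2g+1} \subset \overline{J}_f(2)$ is Galois-stable.

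Next I would promote this to a linear action. Since the Galois group acts by group automorphisms on $\overline{J}_f(\overline{F})$, it restricts to group automorphisms of the 2-torsion subgroup $\overline{J}_f(2)$. By Theorem 2.1, $\overline{J}_f(2) \cong \mathbb{F}_2^{\oplus 2g}$ is generated over $\mathbb{F}_2$ by $[P_1-P_\infty],\ldots,[P_{2g}-P_\infty]$, and every group automorphism of an elementary abelian $2$-group is automatically $\mathbb{F}_2$-linear. This yields a representation
\[
\rho : \text{Gal}(F_f/F) \longrightarrow GL\bigl(\overline{J}_f(2)\bigr) \cong GL_{2g}(\mathbb{F}_2).
\]

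The main check I would expect to carry out carefully is the injectivity of $\rho$: if $\sigma \in \ker\rho$, then $\sigma$ fixes every class $[P_i - P_\infty]$, so by the injectivity of the embedding $e_i \mapsto [P_i - P_\infty]$ established in Theorem 2.1 one concludes that $\sigma$ fixes each root $e_i$; since $F_f = F(e_1,\ldots,e_{2g+1})$ this forces $\sigma = 1$. The one subtle ingredient pervading the argument is the $F$-rationality of $P_\infty$, without which the Galois action on $\{P_i\}$ would not translate cleanly into an action on the classes $\{P_i - P_\infty\}$; but this rationality is built into the choice $P_\infty = [0:1:0]$, so the corollary drops out essentially formally from Theorem 2.1.
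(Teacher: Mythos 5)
Your proposal is correct and follows the same route as the paper: the Galois group permutes the classes $[P_i-P_\infty]$, hence acts $\mathbb{F}_2$-linearly on $\overline{J}_f(2)\cong(\mathbb{Z}/2\mathbb{Z})^{\oplus 2g}$, and faithfulness follows from the injectivity of the root embedding in Theorem 2.1. The paper's own proof is a two-sentence sketch of exactly this argument; yours simply makes explicit the $F$-rationality of $P_\infty$ and the kernel computation.
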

\begin{proof}
Since $Gal(F_{f}/F)$ induces permutation of roots of $f(x)=0$, it also induces basis change of $J_{f}\otimes_{F} {F_{f}}(2)\simeq  (\mathbb{Z}/2\mathbb{Z})^{\oplus2g}$. Furthermore, if the basis change of $J_{f}\otimes_{F} {F_{f}}(2)$ is trivial, the permutation of roots of $f(x)=0$ is also trivial.
\end{proof}

\section{Reciprocity Law}

Remember the following fundamental 2 theorems:
\begin{thm}[Dedekind-Kummer]
Let $K$ be an algebraic field $([K:\mathbb{Q}]<\infty)$ and $\theta$ be an algebraic integer, $L=K(\theta)$, $f(x)$ be its minimal polynomial of $\theta$ over $K$, and $\mathfrak{O}_{L}$ and $\mathfrak{O}_{K}$ be algebraic integer rings of $L$ and $K$ respectively, and $\overline{f}(x)$ $=$ $f(x)$ $(mod$ $\mathfrak{p})$. Then, for a prime ideal $\mathfrak{p}$ of $\mathfrak{O}_{K}$ which is relatively prime to conductor $\mathfrak{F} =\{\alpha\in \mathfrak{O}_{L}|$ a principle ideal $(\alpha)$ of $\mathfrak{O}_{L}\subset \mathfrak{O}_{K}[\theta]\}$, 
\begin{equation}
\mathfrak{p}\mathfrak{O}_{L}=\prod_{i=1}^{g}\mathfrak{P}_{i}^{e_i}
\end{equation}
where $\mathfrak{P}_{i}=\mathfrak{p}\mathfrak{O}_{L} + f_{i}(\theta)\mathfrak{O}_{L}$ is a prime ideal of $\mathfrak{O}_{L}$, $f(x)\equiv\prod_{i=1}^{g}\overline{f}_{i}(x)^{e_i}$ $(mod$ $\mathfrak{p})$ is irreducible decomposition of $f(x)$ $(mod$ $\mathfrak{p})$, and $f_{i}(x)$ is a pull back of $\overline{f_{i}}(x)$ to $\mathfrak{O}_{K}[x]$.
\end{thm}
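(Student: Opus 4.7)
The plan is to reduce the factorization of $\mathfrak{p}$ in $\mathfrak{O}_L$ to the factorization of $\bar f$ in $(\mathfrak{O}_K/\mathfrak{p})[x]$ by passing through the (possibly non-maximal) order $\mathfrak{O}_K[\theta]$, and then use the coprimality of $\mathfrak{p}$ with the conductor $\mathfrak{F}$ to promote the answer from $\mathfrak{O}_K[\theta]$ to $\mathfrak{O}_L$.

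First I would establish the ring-theoretic side. Using $\mathfrak{O}_K[\theta] \cong \mathfrak{O}_K[x]/(f(x))$ I identify
\[
\mathfrak{O}_K[\theta]/\mathfrak{p}\mathfrak{O}_K[\theta]\;\cong\;(\mathfrak{O}_K/\mathfrak{p})[x]/(\bar f(x)).
\]
The hypothesized factorization $\bar f(x)=\prod_{i=1}^{g}\bar f_i(x)^{e_i}$ with distinct irreducible $\bar f_i$ together with the Chinese Remainder Theorem yields
\[
(\mathfrak{O}_K/\mathfrak{p})[x]/(\bar f(x))\;\cong\;\prod_{i=1}^{g}(\mathfrak{O}_K/\mathfrak{p})[x]/(\bar f_i(x)^{e_i}).
\]
Each factor is a local Artin ring with residue field $(\mathfrak{O}_K/\mathfrak{p})[x]/(\bar f_i(x))$, and pulling back the maximal ideal of the $i$-th factor gives exactly the ideal $\mathfrak{p}\mathfrak{O}_K[\theta]+f_i(\theta)\mathfrak{O}_K[\theta]$, whose length in its localization is $e_i$ (read off from the exponent of $\bar f_i$).

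Next I would transfer this picture from $\mathfrak{O}_K[\theta]$ to $\mathfrak{O}_L$ using the conductor hypothesis. By definition of $\mathfrak{F}$, $\mathfrak{F}\cdot \mathfrak{O}_L \subseteq \mathfrak{O}_K[\theta]$, so $\mathfrak{F}$ annihilates $\mathfrak{O}_L/\mathfrak{O}_K[\theta]$. If $\mathfrak{p}$ is coprime to $\mathfrak{F}\cap\mathfrak{O}_K$, then after localizing at $\mathfrak{p}$ this quotient vanishes, i.e.\ $(\mathfrak{O}_L)_\mathfrak{p}=(\mathfrak{O}_K[\theta])_\mathfrak{p}$. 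Consequently the natural map
\[
\mathfrak{O}_K[\theta]/\mathfrak{p}\mathfrak{O}_K[\theta]\;\longrightarrow\;\mathfrak{O}_L/\mathfrak{p}\mathfrak{O}_L
\]
becomes an isomorphism after localizing at each prime of $\mathfrak{O}_K[\theta]$ above $\mathfrak{p}$, and since both sides are finite $\mathfrak{O}_K/\mathfrak{p}$-algebras supported on such primes, it is already an isomorphism. In particular the prime ideals of $\mathfrak{O}_L$ above $\mathfrak{p}$ correspond bijectively to the maximal ideals found in the previous paragraph, and the correspondence is exactly $\mathfrak{P}_i=\mathfrak{p}\mathfrak{O}_L+f_i(\theta)\mathfrak{O}_L$.

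Finally I would read off the ramification indices. Comparing lengths of Artin local rings on the two sides, the $\mathfrak{P}_i$-adic valuation of $\mathfrak{p}\mathfrak{O}_L$ equals the $i$-th factor's length, which is $e_i$; the residue extension degree at $\mathfrak{P}_i$ equals $\deg\bar f_i$, which matches the $f_i$ coming from the polynomial factorization. Multiplying the local contributions gives $\mathfrak{p}\mathfrak{O}_L=\prod \mathfrak{P}_i^{e_i}$ as claimed. The main obstacle I expect is the second step: one has to argue cleanly, using only that $\mathfrak{p}$ is coprime to the conductor, that localization at $\mathfrak{p}$ identifies $\mathfrak{O}_K[\theta]$ with $\mathfrak{O}_L$. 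Once that is in place, everything else is CRT plus length bookkeeping. Since the result is classical (see, e.g., Neukirch, \emph{Algebraic Number Theory}, I.\S 8), I would cite the standard reference rather than grind through the localization argument in full.
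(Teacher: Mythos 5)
Your proposal is correct: it is the standard conductor--localization argument (identify $\mathfrak{O}_K[\theta]/\mathfrak{p}\mathfrak{O}_K[\theta]$ with $(\mathfrak{O}_K/\mathfrak{p})[x]/(\bar f)$, apply CRT, and use coprimality with $\mathfrak{F}$ to identify this with $\mathfrak{O}_L/\mathfrak{p}\mathfrak{O}_L$), which is precisely the proof of Neukirch, Proposition (8.3). The paper's own ``proof'' is nothing more than a citation to that result, so you have in effect supplied the argument the paper delegates to the reference; there is no substantive difference in approach.
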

\begin{proof}
See \cite{Jurgen} (8.3) Proposition.
\end{proof}

\begin{thm}[Inclusion Theorem, see \cite{Nancy} Chapter 5 Exercise 5.12.b]
Let $f(x), g(x)\in \frak{O}_{K}[x]$ be monic irreducible polynomials over algebraic integer ring $\frak{O}_{K}$ of an algebraic field $K$ $([K:\mathbb{Q}]<\infty)$ and $K_{f}$, $K_{g}$ be their minimal splitting fields. Then,

\begin{center}
$K_{f}\supset K_{g}$ if and only if $Spl(K_{f})$$\overset{\ast}{\subset}$$Spl(K_{g})$,
\end{center}
where $Spl(K_{f})$ $=$ $\{$prime ideal $\mathfrak{p}$ of $\mathfrak{O}_{K}$ such that $\mathfrak{p}$ splits completely over $K_{f}\}$ and $Spl(K_{f})$$\overset{\ast}{\subset}$$Spl(K_{g})$ means "$Spl(K_{f})$ is included in $Spl(K_{g})$ except for finite number of prime ideals".
\end{thm}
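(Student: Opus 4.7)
The plan is to prove both directions separately. The forward implication is elementary and follows from the multiplicativity of ramification indices and residue degrees in towers; the converse requires Chebotarev's density theorem applied to the compositum $K_f K_g$.

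For the forward direction, assume $K_f \supset K_g$, so we have the tower $K \subset K_g \subset K_f$. Let $\mathfrak{p}$ be a prime of $\mathcal{O}_K$ that is unramified in $K_f$ and splits completely there. For any prime $\mathfrak{q}$ of $K_g$ lying over $\mathfrak{p}$ and any prime $\mathfrak{P}$ of $K_f$ lying over $\mathfrak{q}$, we have $e(\mathfrak{q}/\mathfrak{p})\,e(\mathfrak{P}/\mathfrak{q}) = e(\mathfrak{P}/\mathfrak{p}) = 1$ and likewise $f(\mathfrak{q}/\mathfrak{p})\,f(\mathfrak{P}/\mathfrak{q}) = f(\mathfrak{P}/\mathfrak{p}) = 1$, forcing $e(\mathfrak{q}/\mathfrak{p}) = f(\mathfrak{q}/\mathfrak{p}) = 1$. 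Hence $\mathfrak{p}$ splits completely in $K_g$, giving $\mathrm{Spl}(K_f) \overset{\ast}{\subset} \mathrm{Spl}(K_g)$ with the exceptional finite set contained in the primes ramifying in $K_f K_g$.

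For the converse, I would use the standard fact that for Galois extensions $L_1, L_2$ of $K$ and an unramified prime $\mathfrak{p}$, the prime $\mathfrak{p}$ splits completely in the compositum $L_1 L_2$ if and only if it splits completely in both $L_1$ and $L_2$; one direction is trivial, and the other follows because the decomposition group at a prime of $L_1 L_2$ injects into the product of decomposition groups at primes of $L_1$ and $L_2$, so it is trivial whenever both factors are. Since $K_f$ and $K_g$ are splitting fields (hence Galois over $K$), applying this with $L_1 = K_f$ and $L_2 = K_g$ to the hypothesis $\mathrm{Spl}(K_f) \overset{\ast}{\subset} \mathrm{Spl}(K_g)$ gives $\mathrm{Spl}(K_f) \overset{\ast}{=} \mathrm{Spl}(K_f K_g)$.

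Now Chebotarev's density theorem states that for any Galois extension $L/K$, the Dirichlet density of $\mathrm{Spl}(L)$ equals $1/[L:K]$. Since $\mathrm{Spl}(K_f)$ and $\mathrm{Spl}(K_f K_g)$ differ only in a finite set (which has density $0$), their densities agree, so $[K_f : K] = [K_f K_g : K]$. This forces $K_f = K_f K_g$, i.e.\ $K_g \subset K_f$. The main obstacle is the invocation of Chebotarev, which is a deep tool, but it is indispensable: without it one cannot conclude equality of field degrees from equality of splitting sets. The remaining steps are bookkeeping with ramification/residue data and the compositum characterization, both of which are routine in algebraic number theory.
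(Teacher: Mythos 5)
Your proposal is correct and follows essentially the same route as the paper: the forward direction via the tower multiplicativity of $e$ and $f$, and the converse by passing to the compositum to get $Spl(K_{f})\overset{\ast}{=}Spl(K_{f}K_{g})$ and then invoking Chebotarev's density theorem to conclude $[K_{f}K_{g}:K]=[K_{f}:K]$. You simply supply the details (the decomposition-group argument for the compositum step) that the paper leaves implicit.
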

\begin{proof}
If $K_{f}\supset K_{g}$, then it is easy to see that $Spl(K_{f})$$\overset{\ast}{\subset}$$Spl(K_{g})$. Conversely, we assume that $Spl(K_{f})$$\overset{\ast}{\subset}$$Spl(K_{g})$. Then $Spl(K_{f}K_{g})$$\overset{\ast}{=}$$Spl(K_{f})$, where $A\overset{\ast}{=}B$ means $A=B$ except for finite number of elements for sets $A$ and $B$". Hence by Chebotarev's density theorem (see \cite{Cassels}, Chapter VIII), $[K_{f}K_{g}:K]=[K_{f}:K]$. This means that $K_{g}\subset K_{f}$.

\end{proof}

Now we consider an irreducible polynomial $f(x) = x^{2g+1}+a_{1}x^{2g}+a_{2}x^{2g-1}+\cdots+a_{2g}x+a_{2g+1}$ over algebraic integer ring $\mathfrak{O}_{K}$ of an algebraic field $K$ $([K:\mathbb{Q}]<\infty)$ and its minimal splitting filed $K_{f}=K(\alpha)$. If $g(x)$ is a minimal polynomial of $\alpha$ over $K$, $K_{f}=K_{g}$ and therefore $Spl(K_{f})$$\overset{\ast}=$$Spl(K_{g})$ ($\overset{\ast}=$ means "equal" except for finite number of prime numbers). 

Now, we consider $Spl(f)$ $=$ $\{$prime ideal $\mathfrak{p}$ of $\mathfrak{O}_{K}$ such that $f(x)$ mod $\mathfrak{p}$ is decomposed to distinct liner factors$\}$. Then, $Spl(f)\overset{\ast}{=}Spl(K_{f})$. Actually, if $\mathfrak{p}\in Spl(f)$, $\mathfrak{p}$ splits completely over $K_{f}$ because of Theorem 3.1.

Therefore, Theorem 3.1 and 3.2 lead that a prime ideal $\mathfrak{p}$ which is relatively prime to 2 and conductor $\mathfrak{F}$ splits completely over $K_{f}$ if and only if $f(x)$ is decomposed to a product of distinct linear factors over $\mathfrak{O}_{K}/\mathfrak{p}$. Furthermore, if $f(x)$ is decomposed to a product of distinct linear factors over $\mathfrak{O}_{K}/\mathfrak{p}$, $J_{f, \mathfrak{p}}(2)$ $\cong$ $\mathbb{Z}/2\mathbb{Z}^{\oplus 2g}$ because of Theorem 2.1, where $J_{f,\mathfrak{p}}$ is Jacobian Variety of a projective non-singular curve model $C_{f,\mathfrak{p}}$ of $y^2=f(x)$ over $\mathfrak{O}_{K}/\mathfrak{p}$ and $J_{f,\mathfrak{p}}(2)$ is 2-torsion subgroup of rational points of $J_{f, \mathfrak{p}}$ over $\mathfrak{O}_{K}/\mathfrak{p}$.\\

Therefore we obtain the following theorem:
\begin{thm}[\bf{Reciprocity law of finite Galois extension field}]
Let $K$ be a finite algebraic field $([K:\mathbb{Q}]<\infty)$ and $\mathfrak{O}_{K}$ be its integer ring, $f(x) = x^{2g+1}+a_{1}x^{2g}+a_{2}x^{2g-1}+\cdots+a_{2g}x+a_{2g+1}$ be an odd degree irreducible polynomial over $\mathfrak{O}_{K}$ $(g>0)$, and $K_{f}$ be its minimal splitting field over $K$. For a prime ideal $\mathfrak{p}$ of $\mathfrak{O}_{K}$ which is relatively prime to 2 and conductor $\mathfrak{F} =\{\alpha\in \mathfrak{O}_{K(\theta)}|$ a principle ideal $(\alpha)$ of $\mathfrak{O}_{K(\theta)}\subset \mathfrak{O}_{K}[\theta]$, where $f(\theta)=0$ and $\theta\in K_{f}\}$, let $J_{f,\mathfrak{p}}$ be Jacobian Variety of a projective non-singular irreducible model $C_{f,\mathfrak{p}}$ of $y^2=f(x)$ of genus $g$ over $\mathfrak{O}_{K}/\mathfrak{p}$, where we assume that $f(x)=0$ has no multiple root over $\mathfrak{O}_{K}/\mathfrak{p}$. Then, the prime ideal $\mathfrak{p}$ of $\mathfrak{O}_{K}$ which is relatively prime to conductor $\mathfrak{F}$ splits completely over $K_{f}$ if and only if $J_{f, \mathfrak{p}}(2)$ $\cong$ $\mathbb{Z}/2\mathbb{Z}^{\oplus 2g}$.
\end{thm}

Usual cyclotomic field theory uses commutative group variety $\mathbb{Q}^{\ast}$. All roots of $x^n=1$ are embedded into torsion subgroup of $\overline{\mathbb{Q}}^{\ast}$ naturally and a prime number $p$ splits completely over $\mathbb{Q}(\zeta_{n})$ if and only if $n\equiv 1$ (mod $p$), this means "all roots of $x^n=1$ over $\overline{\mathbb{F}}_{p}$ are included in rational points of commutative group variety $\mathbb{F}_{p}^{\ast}$". However, the degree of cyclotomic polynomials over $\mathbb{Q}$ is even. Therefore, Theorem 3.3 is essentially different from usual cyclotomic field theory.\\

%%%%%%%%%%%%%%%%%%%%%%%%%%%%%%%%%%%%%%%%%%%%%%%%%%%%%%%%%%%%%%%%%%%%%%%%%%%%%%%%%%%%%%%%%%%%%%%%%%%%%%%%%%%%%%%%%%
%%Reference start
%%%%%%%%%%%%%%%%%%%%%%%%%%%%%%%%%%%%%%%%%%%%%%%%%%%%%%%%%%%%%%%%%%%%%%%%%%%%%%%%%%%%%%%%%%%%%%%%%%%%%%%%%%%%%%%%%%

\end{document}